%
%
%
%
%
%
%
%
\documentclass[secnum,leqno]{rims-bessatsu}
%
%
\usepackage{amssymb, amsmath}
\usepackage{graphics}
\usepackage[dvips]{graphicx}
\usepackage{eepic}
\usepackage{epic}
%
%
%
\newtheorem{thm}{Theorem}[section]

\newtheorem{lmm}[thm]{Lemma}

\newcommand{\cd}{\, \cdot\, }

\newcommand{\Rn}{{\mathbb R}^n}

\newcommand{\la}{\lambda}

\newcommand{\e}{\varepsilon}

\renewcommand{\Pi}{\varPi}

\renewcommand{\epsilon}{\varepsilon}

\newcommand{\R}{{\mathbb R}}

\newcommand{\ltube}{{\mathcal T}_{\small\la^{-1/2}\normalsize}(\gamma)}
\newcommand{\dtube}{{\mathcal T}_{\delta}(\gamma)}

\newcommand{\rball}{B_r(x)}
\newcommand{\lball}{B_{\la^{-1}}(x)}

\newcommand{\vertiii}[1]{{\left\vert\kern-0.25ex\left\vert\kern-0.25ex\left\vert #1 
    \right\vert\kern-0.25ex\right\vert\kern-0.25ex\right\vert}}

\newcommand{\1}{{\rm 1\hspace*{-0.4ex}%
\rule{0.1ex}{1.52ex}\hspace*{0.2ex}}}

\theoremstyle{definition}

\theoremstyle{remark}
%
%
%
%
\title{Localized $L^p$-estimates for eigenfunctions: II}
\author{Christopher D.  \textsc{Sogge}\footnote{Johns Hopkins University, Baltimore, MD
21218, USA.\newline e-mail: \texttt{sogge@jhu.edu}}}
\AuthorHead{Christopher D. Sogge}         
\classification{58J51 (35P15 35R01 42B37)}
\support{The author was supported in part by the NSF grant DMS-1361476.}  
\keywords{\textit{Eigenfunctions, localization, quantum ergodicity}}         
%
%
\VolumeNo{x}           
\YearNo{201x}           
\PagesNo{000--000}      
%
%
\communication{Received April 20, 201x.
Revised September 11, 201x.}      
%
%
%
\begin{document}
%

\maketitle

\begin{abstract}      
If $(M,g)$ is a compact Riemannian manifold of dimension $n\ge 2$ we give necessary and sufficient conditions for improved $L^p(M)$-norms of eigenfunctions for
all $2<p\ne p_c=\tfrac{2(n+1)}{n-1}$, the critical exponent.  Since improved $L^{p_c}(M)$ bounds imply improvement
all  other exponents, these conditions are necessary for improved bounds for the critical space.  We also show
that improved $L^{p_c}(M)$ bounds are valid if these conditions are met and if the half-wave operators, $U(t)$, 
have no caustics
when $t\ne 0$.  The problem of finding a necessary and sufficient condition for $L^{p_c}(M)$ improvement remains 
an interesting open problem.
\end{abstract}

\section{Local and Global Estimates of Eigenfunctions}

Let $(M,g)$ be a compact Riemannian manifold of dimension $n\ge2$.  If $\Delta_g$ is the associated
Laplace-Beltrami operator, we shall consider the $L^2$-normalized eigenfunctions satisfying
\begin{equation}\label{1.1}
-\Delta_g e_\la(x)=\la^2 e_\la(x) \quad \text{and } \, \, \int_M|e_\la|^2 \, dV_g=1,
\end{equation}
with $dV_g$ denoting the Riemannian volume element.  The purpose of this paper is to show
that one has favorable $L^p(M)$ bounds for the $e_\la$ if and only if there is not saturation
of $L^p$ or $L^2$ norms taken over very small sets that shrink as $\la\to \infty$.  The sets depend
on whether $p>2$ is larger or smaller than the critical exponent $p=p_c=\tfrac{2(n+1)}{n-1}$.
For $p>p_c$ there are improved $L^p(M)$ bounds if and only if $L^p$ or $L^2$ norms over geodesic balls
of radius $\la^{-1}$ are not saturated, while for $2<p<p_c$ one obtains improvement if and only if
there is not saturation of these norms taken over $\la^{-1/2}$ tubular neighborhoods of unit length geodesics.
We shall also show that we have improved $L^{p_c}(M)$ bounds if we have improved $L^p(M)$ estimates
for all $p\in (2,\infty]\, \backslash \, \{p_c\}$ and if the half-wave operators $e^{-it\sqrt{-\Delta_g}}$
have no caustics for non-zero times (see \S 2).

Recall that in \cite{Sef} we showed that for $p>2$ one always has the universal bounds
\begin{equation}\label{1.2}
\|e_\la\|_{L^p(M)}=O(\la^{\mu(p)}),
\end{equation}
if
\begin{equation}\label{1.3}
\mu(p)=
\begin{cases}
\tfrac{n-1}2(\tfrac12-\tfrac1p), \quad \, \, \, \,  2<p\le p_c,
\\
n(\tfrac12-\tfrac1p)-\tfrac12, \quad p_c\le p\le \infty.
\end{cases}
\end{equation}
These norms are saturated on the round sphere $S^n$; however, for generic manifolds $(M,g)$
it was shown by Zelditch and the author \cite{SZDuke} that
$\|e_\la\|_{L^p(M)}=o(\la^{\mu(p)})$ if $p>p_c$.  Whether one generically has
improvements for $2<p< p_c$
or better yet for $p=p_c$
remains an open problem.

Let us now state two of our main results.  If $B_r(x)$ denotes a geodesic ball of radius
$0<r<\text{Inj }M$ (the injectivity radius of $(M,g)$), then the first is the following.

\begin{thm}\label{thm1.1}  The following are equivalent:
\begin{align}
\|e_\la\|_{L^p(M)}&=o(\la^{\mu(p)}), \quad \,   \text{for all } \, \, p>p_c,
\label{1.4}
\\
\sup_{x\in M} \|e_\la\|_{L^p(B_{\la^{-1}}(x))}&=o(\la^{\mu(p)}), \quad \text{for some } \, \, p> p_c,
\label{1.5}
\\
\sup_{x\in M} \|e_\la\|_{L^2(B_{\la^{-1}}(x))}&=o(\la^{-\frac12}).
\label{1.6}
\end{align}
\end{thm}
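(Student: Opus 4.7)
The plan is to establish the cycle \eqref{1.4} $\Rightarrow$ \eqref{1.5} $\Rightarrow$ \eqref{1.6} $\Rightarrow$ \eqref{1.4}. The first two implications are soft; the third carries the content.

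\eqref{1.4} $\Rightarrow$ \eqref{1.5} is immediate, since for any $p > p_c$ and any $x \in M$ one has $\|e_\la\|_{L^p(B_{\la^{-1}}(x))} \le \|e_\la\|_{L^p(M)}$. For \eqref{1.5} $\Rightarrow$ \eqref{1.6}, I would apply H\"older's inequality on the ball of volume $\sim \la^{-n}$:
\[
\|e_\la\|_{L^2(B_{\la^{-1}}(x))} \lesssim \la^{-n(1/2 - 1/p)} \|e_\la\|_{L^p(B_{\la^{-1}}(x))} = \la^{-n(1/2-1/p)} \cdot o(\la^{\mu(p)}).
\]
Since $\mu(p) = n(1/2 - 1/p) - 1/2$ for $p \ge p_c$, the exponent collapses to exactly $-1/2$ uniformly in $x$, giving \eqref{1.6}.

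The heart of the argument is \eqref{1.6} $\Rightarrow$ \eqref{1.4}, which I would carry out in two steps using only the universal bound from \cite{Sef} and elliptic regularity. Step one is a Bernstein--Sobolev pointwise bound
\[
|e_\la(x)| \lesssim \la^{n/2} \|e_\la\|_{L^2(B_{2\la^{-1}}(x))}.
\]
To prove this, rescale by $\la^{-1}$ in geodesic normal coordinates at $x$: the function $v(z) = e_\la(y(z))$, with $y(z)$ at geodesic displacement $\la^{-1}z$ from $x$, satisfies a perturbation of $(-\Delta_{\mathrm{Eucl}} - 1)v = 0$ on $\{|z|\le 2\}$ whose second-order coefficients differ from the Euclidean ones by $O(\la^{-2})$ and whose first-order part is $O(\la^{-1})$. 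Standard interior elliptic $L^2$-to-$L^\infty$ regularity then yields $\|v\|_{L^\infty(|z|\le 1)} \lesssim \|v\|_{L^2(|z|\le 2)}$, and rescaling back produces the pointwise bound. Combined with \eqref{1.6} (using that $B_{2\la^{-1}}$ is covered by $O(1)$ balls of radius $\la^{-1}$), this yields $\|e_\la\|_{L^\infty(M)} = o(\la^{(n-1)/2})$.

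Step two is log-convexity of $L^p$ norms: for $p > p_c$,
\[
\|e_\la\|_{L^p(M)} \le \|e_\la\|_{L^{p_c}(M)}^{p_c/p} \, \|e_\la\|_{L^\infty(M)}^{1 - p_c/p}.
\]
The universal bound from \cite{Sef} gives $\|e_\la\|_{L^{p_c}(M)}^{p_c/p} \lesssim \la^{\mu(p_c)p_c/p} = \la^{1/p}$, using the identity $\mu(p_c)p_c = 1$ (easily checked by substitution). The improved $L^\infty$ bound from step one contributes $o(\la^{(n-1)(1-p_c/p)/2})$, and a short computation with $p_c = 2(n+1)/(n-1)$ shows that the two exponents add up to exactly $\mu(p) = (n-1)/2 - n/p$. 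Hence $\|e_\la\|_{L^p(M)} = o(\la^{\mu(p)})$ for every $p > p_c$. The main technical obstacle is ensuring uniformity of the constant in step one over $x \in M$, since both the metric and the coordinate chart vary; a finite cover of $M$ by coordinate patches on which the rescaled operators are uniformly elliptic with uniformly bounded coefficients resolves this. It is worth noting that the interpolation step genuinely requires $p > p_c$ and yields no improvement at the critical exponent itself, which is consistent with the open problem flagged in the introduction.
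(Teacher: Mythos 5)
Your proposal is correct, and its overall architecture coincides with the paper's: the cycle \eqref{1.4} $\Rightarrow$ \eqref{1.5} $\Rightarrow$ \eqref{1.6} is handled exactly as in the text (trivial inclusion, then H\"older on a ball of volume $\approx\la^{-n}$ using $\mu(p)=n(\tfrac12-\tfrac1p)-\tfrac12$ for $p\ge p_c$), and the substantive implication \eqref{1.6} $\Rightarrow$ \eqref{1.4} is in both cases reduced to the pointwise bound $\|e_\la\|_{L^\infty(M)}\lesssim \la^{n/2}\sup_x\|e_\la\|_{L^2(B_{\la^{-1}}(x))}$ followed by the interpolation $\|e_\la\|_{L^p}\le\|e_\la\|_{L^{p_c}}^{p_c/p}\|e_\la\|_{L^\infty}^{1-p_c/p}$ against the universal $L^{p_c}$ bound of \cite{Sef}; your exponent bookkeeping ($\mu(p_c)p_c=1$ and $\tfrac1p+\tfrac{n-1}2(1-\tfrac{p_c}p)=\mu(p)$) matches \eqref{1.11'}. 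Where you genuinely diverge is in how that $L^\infty$ input is obtained: the paper simply invokes the $p=\infty$, $r=\la^{-1}$ case of the localized estimate \eqref{1.10} from \cite{SHR}, whereas you reprove this special case from scratch by rescaling to wavelength scale and applying interior elliptic regularity (De Giorgi--Nash--Moser local boundedness, or an $H^2$ bootstrap plus Sobolev embedding) to the perturbed Helmholtz equation $(-\Delta_{g_\la}-1)v=0$ on $\{|z|\le 2\}$. That argument is sound --- the rescaled operators are uniformly elliptic with uniformly bounded coefficients over a finite atlas, as you note --- and it buys self-containedness: Theorem~\ref{thm1.1} then rests only on \cite{Sef} and classical elliptic theory rather than on the full strength of \eqref{1.10}. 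What it does not give you is the estimate \eqref{1.10} at intermediate radii $\la^{-1}\le r<\text{Inj }M$ and finite $p$, which is proved in \cite{SHR} by wave-equation methods and is needed elsewhere in the paper (e.g.\ \eqref{2.24} in the proof of Theorem~\ref{theorem2.1}); your closing remark that the interpolation degenerates at $p=p_c$ is accurate and consistent with the discussion in \S 2.
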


It was shown in \cite{SHR} and \cite{Scrit} that on any $(M,g)$ one has $\|e_\la\|_{L^2(B_r(x))}\le Cr^{\frac12}$
with $C=C_M$ for $\la^{-1}\le r\le \text{Inj }M$, and so \eqref{1.6} just involves improving this
universal estimate in the extreme case where the radius $r=\la^{-1}$ is the frequency of the eigenfunction.

If $\varPi$ denotes the space of all unit length geodesics on $M$ and if $\dtube$ denotes
the $\delta$ tubular neighborhood about a given $\gamma\in \varPi$, i.e.,
$$\dtube=\{x\in M: \, d_g(x,\gamma)<\delta\},$$
with $d_g(\cd, \cd)$ denoting the Riemannian distance function, then we also have the following
complementary result.

\begin{thm}\label{thm1.2}
The following are equivalent:
\begin{align}
\|e_\la\|_{L^p(M)}&=o(\la^{\mu(p)}), \quad \, \text{for all} \, \, 2<p<p_c,
\label{1.7}
\\
\sup_{\gamma\in \varPi}\|e_\la\|_{L^p(\ltube)}&=o(\la^{\mu(p)}), \quad \text{for some } \, \, 2<p<p_c,
\label{1.8}
\\
\sup_{\gamma \in \varPi}\|e_\la\|_{L^2(\ltube)}&=o(1).
\label{1.9}
\end{align}
\end{thm}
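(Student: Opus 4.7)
\medskip

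\noindent\textbf{Proof plan.} My plan is to establish the cyclic chain $\eqref{1.7}\Rightarrow\eqref{1.8}\Rightarrow\eqref{1.9}\Rightarrow\eqref{1.7}$. The first two implications are soft, and the third carries all of the analytic content.

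The implication $\eqref{1.7}\Rightarrow\eqref{1.8}$ is immediate from the trivial inequality $\|e_\la\|_{L^p(\ltube)}\le \|e_\la\|_{L^p(M)}$ for every $\gamma\in\varPi$. For $\eqref{1.8}\Rightarrow\eqref{1.9}$, the key observation is that $|\ltube|\lesssim \la^{-(n-1)/2}$ (length $\approx 1$, transverse width $\la^{-1/2}$ in each of the $n-1$ normal directions). Applying H\"older's inequality on the tube and using the explicit formula for $\mu(p)$ in the subcritical range,
\begin{equation*}
\|e_\la\|_{L^2(\ltube)} \,\le\, |\ltube|^{\frac12-\frac1p}\|e_\la\|_{L^p(\ltube)}
\,\lesssim\, \la^{-\frac{n-1}{2}\left(\frac12-\frac1p\right)}\|e_\la\|_{L^p(\ltube)}
\,=\, \la^{-\mu(p)}\|e_\la\|_{L^p(\ltube)},
\end{equation*}
so \eqref{1.8} at a given $p\in(2,p_c)$ forces $\sup_\gamma\|e_\la\|_{L^2(\ltube)}=o(1)$.

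The heart of the matter is $\eqref{1.9}\Rightarrow\eqref{1.7}$. Here I would invoke a Kakeya--Nikodym bound of the form
\begin{equation*}
\|e_\la\|_{L^p(M)} \,\le\, C\,\la^{\mu(p)}\Bigl(\sup_{\gamma\in\varPi}\|e_\la\|_{L^2(\ltube)}\Bigr)^{\theta(p)} \,+\, C\la^{\mu(p)-\varepsilon(p)},
\end{equation*}
valid for some $\theta(p),\varepsilon(p)>0$ at each exponent $p\in(2,p_c)$. Estimates of this shape, tying subcritical $L^p$ concentration to $L^2$ mass on $\la^{-1/2}$-tubes, are precisely what the Bourgain/Blair--Sogge Kakeya--Nikodym machinery provides on arbitrary compact $(M,g)$. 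Combined with \eqref{1.9} and $\|e_\la\|_{L^2(M)}=1$, this immediately yields $\|e_\la\|_{L^{p_0}(M)}=o(\la^{\mu(p_0)})$ at a single $p_0\in(2,p_c)$. To propagate the improvement to the full range, I would interpolate using log-convexity of $L^p$ norms: below $p_0$, interpolate against $\|e_\la\|_{L^2}=1$; above $p_0$, interpolate against the universal bound $\|e_\la\|_{L^{p_c}}=O(\la^{\mu(p_c)})$ from \eqref{1.2}. Because $\mu(p)$ is linear in $1/p$ on $(2,p_c]$ and $\mu(2)=0$, the arithmetic of the interpolation exponents is exact, and $o(\la^{\mu(p_0)})$ at one subcritical exponent produces $o(\la^{\mu(q)})$ at every $q\in(2,p_c)$.

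The one genuinely hard step is the Kakeya--Nikodym bound used in the last implication; this is the microlocal/wave-packet input that cannot be circumvented and is where all of the geometry of the half-wave propagator enters. Everything else is H\"older, restriction, and convexity of $L^p$-norms, and amounts to carefully matching the tube measure $\la^{-(n-1)/2}$ against the subcritical exponent $\mu(p)=\tfrac{n-1}{2}(\tfrac12-\tfrac1p)$ so that the two sides of the desired equivalence line up on the nose.
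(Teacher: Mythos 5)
Your proposal is correct and follows essentially the same route as the paper: \eqref{1.7}$\Rightarrow$\eqref{1.8} is trivial, \eqref{1.8}$\Rightarrow$\eqref{1.9} is H\"older's inequality using $|\ltube|\approx\la^{-(n-1)/2}$ and $\mu(p)=\tfrac{n-1}2(\tfrac12-\tfrac1p)$, and \eqref{1.9}$\Rightarrow$\eqref{1.7} is exactly the Kakeya--Nikodym estimates of Bourgain, the author, and Blair--Sogge that the paper also cites rather than reproves. Your added remark about propagating the improvement from one subcritical exponent to all of $(2,p_c)$ by log-convexity is a correct (and standard) detail that the paper leaves implicit.
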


The scales in the two theorems are very natural.  As far as the first one goes, recall that
the $L^2$-normalized zonal functions, $Z_\la$, on $S^n$ saturate the $L^p$ bounds in \eqref{1.2} for
$p\ge p_c$ (see \cite{Sthesis}).  This is because, modulo lower order terms, the $Z_\la$
behave like an oscillatory factor times $r^{-\frac{n-1}2}$ if $r$ is larger than $\la^{-1}$,
with $r$ denoting the minimum of the distance to the two poles on $S^n$, and $|Z_\la|\approx \la^{\frac{n-1}2}$
if $r$ is smaller than a fixed multiple of $\la^{-1}$.  Using this fact, a simple calculation
shows that the quantities in the left side of 
\eqref{1.4}--\eqref{1.5} and \eqref{1.6} are $\Omega(\la^{\mu(p)})$ and $\Omega(\la^{-\frac12})$, respectively,
assuming in the former case, that $p\ge p_c$.
Similarly, if we write $S^n$ as $\{x\in {\mathbb R}^{n+1}: \, x_1^2+\cdots + x_{n+1}^2=1\}$ and if
the highest weight spherical harmonics, $Q_\la$, are given by $\la^{\frac{n-1}4}(x_1+ix_2)^\la$,
where $\la=\la_k=\sqrt{k(k+n-1)}$, $k\in {\mathbb N}$, then these eigenfunctions have $L^2(S^n)$ norms which are comparable
to one, and, moreover, the quantities in \eqref{1.7}--\eqref{1.8} and \eqref{1.9}
are $\Omega(\la^{\mu(p)})$ and $\Omega(1)$, respectively, provided that $2<p\le p_c$.
Thus, the $Z_\la$ have the largest possible $L^2$ or $L^p$, $p\ge p_c$, mass in balls of radius
$\la^{-1}$ about either of the two poles, $\pm(0,\dots,0,1)$, on $S^n$, while the $Q_\la$
have the largest possible $L^2$ or $L^p$, $2<p\le p_c$, mass in tubes of radius $\la^{-1/2}$ of the equator where $0=x_3=\cdots=x_{n+1}$.

To verify Theorem~\ref{thm1.1}, first notice that \eqref{1.4} trivially implies \eqref{1.5}.  Also,
since $\mu(p)=n(\tfrac12-\tfrac1p)-\tfrac12$ for $p\ge p_c$, by H\"older's inequality, \eqref{1.5}
implies \eqref{1.6}.  To prove the nontrivial part of the theorem, which says that
\eqref{1.6} implies \eqref{1.4}, we recall the following recent result of the author from \cite{SHR},
which says that there is a uniform constant $C=C(M,g)$ so that for all $p>2$
\begin{equation}\label{1.10}
\|e_\la\|_{L^p(\rball)}\le Cr^{-\frac12}\la^{\mu(p)}\|e_\la\|_{L^2(B_{2r}(x))}, 
\quad \la \ge 1, \, \, \, \la^{-1}\le r<\text{Inj }M.\end{equation}
See Hezari and Rivi\`ere~\cite{HezR} for earlier related work.

We shall  use the fact that the special case of \eqref{1.10} with $r=\la^{-1}$ and $p=\infty$ implies that
\begin{equation}\label{1.11}
\|e_\la\|_{L^\infty(M)}\le C\la^{\mu(\infty)} \, 
\bigl(\la^{\frac{1}2}\sup_{x\in M}\|e_\la\|_{L^2(\lball)}\bigr).
\end{equation}
Since for $p_c<p\le \infty$,
$$\|e_\la\|_{L^p(M)}\le \|e_\la\|_{L^\infty(M)}^{\theta(p)}\, \|e\|_{L^{p_c}(M)}^{1-\theta(p)}, 
\quad \text{if } \, \, \theta(p)=\tfrac{p-p_c}p,
$$
we see that \eqref{1.2} and \eqref{1.11} yield
\begin{multline}\tag{1.11$'$}\label{1.11'}
\|e_\la\|_{L^p(M)}\le C \bigl(\la^{\mu(\infty)}\sup_{x\in M}\la^{\frac12}\|e_\la\|_{L^2(\lball)}\bigr)^{\theta(p)}
\times
\bigl(\la^{\mu(p_c)}\|e_\la\|_{L^2(M)}\bigr)^{1-\theta(p)}
\\
=C\la^{\mu(p)}\bigl(\la^{\frac12}\sup_{x\in M}\|e_\la\|_{L^2(\lball)}\bigr)^{\theta(p)}, \quad
\text{if } \, p>p_c,
\end{multline}
due to the fact that the eigenfunctions are $L^2$-normalized, and, by \eqref{1.3},
$$\mu(p)=\theta(p)\mu(\infty)+(1-\theta(p))\mu(p_c), \quad \text{if } \, p>p_c.$$
Clearly \eqref{1.11'} shows that \eqref{1.6} implies \eqref{1.4}, which completes the proof
of Theorem~\ref{thm1.1}.

To verify Theorem~\ref{thm1.2}, we first note that of course \eqref{1.7} implies \eqref{1.8}.  Also,
since $\mu(p)=\tfrac{n-1}2(\tfrac12-\tfrac1p)$ for $2<p\le p_c$, by H\"older's inequality, \eqref{1.9} follows from
\eqref{1.8}.  The nontrivial part, which is that \eqref{1.9} implies \eqref{1.7} was first established
in the two dimensional case by the author in \cite{SKN} following earlier related partial results 
of Bourgain~\cite{Bourgainef}.  In this work we called the quantity in the left side of \eqref{1.9}
the ``Kakeya-Nikodym'' norm of $e_\la$.  For higher dimensions $n\ge 3$, the fact that \eqref{1.9}
implies \eqref{1.7} is due to Blair and the author in \cite{BSJ}, and refinements were made
in \cite{BSAPDE}, \cite{BSK15} and \cite{BSTop}.  Zelditch and the author showed in \cite{SZStein}
that when $n=2$ if $(M,g)$ has nonpositive curvature the Kakeya-Nikodym norms are $o(1)$ and hence
one has \eqref{1.7}, and this result was extended to higher dimensions by Blair and the author in 
\cite{BSJ}.

\section{Improved Bounds for the Critical Space}
The problem of showing that in certain cases one has $\|e_\la\|_{L^{p_c}(M)}=o(\la^{\mu(p_c)})$ is 
much more subtle than showing that either \eqref{1.4} or \eqref{1.7} is valid.  Indeed,
as is well known (see e.g. \cite{SHR}) improvements for the critical space imply ones for all the
other spaces.  For $2<p<p_c$ one just uses H\"older's inequality, while for $p>p_c$ one obtains
improved $L^p$ bounds from improved ones for the critical space via Sobolev/Bernstein inequalities.
Indeed, if $\rho$ as in \eqref{2.3} equals one at the origin and has compactly supported 
Fourier transform then $\rho(\la^{-1}(\la-\sqrt{-\Delta_g}))e_\la=e_\la$, and, by
the arguments in \S 5.3, of \cite{SFIO}, this operator has a kernel which, for every $N=1,2,3,\dots$,
is $O(\la^n(1+\la d_g(x,y))^{-N})$ and so, by Young's inequality,
$\|\rho(\la^{-1}(\la+\sqrt{-\Delta_g}))\|_{L^p(M)\to L^q(M)}$ is $O(\la^{n(\frac1p-\frac1q)})$ for $p\le q$, 
Using this fact and \eqref{1.3}, one immediately sees that improved $L^{p_c}(M)$ bounds
lead to improved  $L^p(M)$ bounds for all $p>p_c$.

Thus, by Theorems \ref{thm1.1} and \ref{thm1.2},
if $\|e_\la\|_{L^{p_c}(M)}=o(\la^{\mu(p_c)})$ one must have \eqref{1.6} and \eqref{1.9}.  An interesting
question would be if these two necessary conditions for improved critical space bounds are sufficient.
Although we cannot answer this question we can adapt arguments from \cite{Scrit} to obtain
the following partial result.

\begin{thm}\label{theorem2.1}
Suppose that \eqref{1.6} and \eqref{1.9} are valid.  Suppose further that if $P=\sqrt{-\Delta_g}$
the half-wave operators,
\begin{equation}\label{2.1}U(t)=e^{-itP},\end{equation}
have no caustics when $t\ne 0$.  We then have
\begin{equation}\label{2.2}
\|e_\la\|_{L^{p_c}(M)}=o(\la^{\mu(p_c)}), \quad p_c=\tfrac{2(n+1)}{n-1}.
\end{equation}
\end{thm}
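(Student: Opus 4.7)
The plan is to adapt the approach of \cite{Scrit}, combining a short-time parametrix that exploits the Kakeya-Nikodym hypothesis \eqref{1.9} with a long-time analysis, permitted by the no-caustics hypothesis, that exploits the small-ball hypothesis \eqref{1.6}. Heuristically, the two extremal families saturating Sogge's bound at $p_c$---the highest-weight spherical harmonics $Q_\lambda$, tube-concentrated, and the zonal functions $Z_\lambda$, ball-concentrated---are ruled out respectively by \eqref{1.9} and \eqref{1.6}. The no-caustics hypothesis is what ensures that no $Z_\lambda$-type focal concentration can be regenerated by the wave propagator at nonzero times, so that the ball-type obstruction is captured uniformly by \eqref{1.6}.

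Concretely, I would fix $\rho\in C_0^\infty(\mathbb{R})$ with $\rho(0)=1$ and $\widehat{\rho}$ of compact support, let $T=T(\lambda)\to\infty$ be a slowly growing time scale to be chosen, and use the reproducing identity
\begin{equation*}
e_\lambda = \rho\bigl(T^{-1}(\lambda-P)\bigr)\,e_\lambda = \frac{1}{2\pi T}\int\widehat{\rho}(t/T)\,e^{it\lambda}\,U(t)\,e_\lambda\,dt.
\end{equation*}
Introducing a cutoff $\chi\in C_0^\infty((-1,1))$ with $\chi\equiv 1$ on $[-1/2,1/2]$, I would split the operator $\rho(T^{-1}(\lambda-P))$ into a short-time piece $S$ corresponding to $\chi(t)$ and a long-time piece $L$ corresponding to $1-\chi(t)$. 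For the short-time piece, I would apply the standard H\"ormander parametrix of \cite{Sef} and then run the microlocal/bilinear tube analysis of \cite{BSJ,BSAPDE,BSK15,BSTop,Scrit}, extended up to $p=p_c$, to conclude
\begin{equation*}
\|S\,e_\lambda\|_{L^{p_c}(M)} = o\bigl(\lambda^{\mu(p_c)}\bigr)
\end{equation*}
as a consequence of \eqref{1.9}.

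For the long-time piece $L$, the no-caustics hypothesis allows one to represent $U(t)$, $1/2\leq|t|\leq T$, as a global Fourier integral operator with non-degenerate phase on the frequency shell around $\lambda$, yielding a uniform dispersive kernel bound of order $O\bigl(\lambda^{(n-1)/2}(1+\lambda|t|)^{-(n-1)/2}\bigr)$ after frequency localization. Averaging against $\widehat{\rho}(t/T)/T$ on $[1/2,T]$ produces an $L^1\!\to\!L^\infty$ bound on the kernel of $L$ that, combined with \eqref{1.11}, yields the refined estimate
\begin{equation*}
\|L\,e_\lambda\|_{L^\infty(M)} \leq C\lambda^{\mu(\infty)}\bigl(\lambda^{1/2}\sup_{x\in M}\|e_\lambda\|_{L^2(\lball)}\bigr),
\end{equation*}
which by \eqref{1.6} is $o(\lambda^{\mu(\infty)})$. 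Interpolating with the universal $L^{p_c}$ bound via \eqref{1.11'} produces $\|L\,e_\lambda\|_{L^{p_c}(M)}=o(\lambda^{\mu(p_c)})$. Summing the short- and long-time contributions and choosing $T=T(\lambda)$ slowly enough to preserve both little-$o$ gains yields \eqref{2.2}.

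The main obstacle will be the short-time step: the Kakeya-Nikodym bounds of \cite{BSJ,BSAPDE,BSK15,BSTop} are optimized for subcritical $p<p_c$, where tube-orthogonality gains have a genuine power-of-$\lambda$ improvement that degenerates at the critical exponent. Pushing the bilinear decomposition all the way to $p=p_c$ and carefully separating the Knapp-type tube contributions (controlled by \eqref{1.9}) from the ball-type contributions (to be absorbed into $L$ and controlled by \eqref{1.6}) is the technical heart of the proof, and is precisely what forces the no-caustics assumption: without it, long-time focal concentrations on $U(t)$ could reconstitute $Z_\lambda$-type mass beyond what \eqref{1.6} measures on the unit $\lambda^{-1}$ scale.
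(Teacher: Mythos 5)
Your proposal has two genuine gaps, and the second one is fatal to the overall strategy.

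First, the short-time step. You propose to ``push the bilinear decomposition all the way to $p=p_c$'' so that the Kakeya--Nikodym hypothesis \eqref{1.9} yields $\|S\,e_\la\|_{L^{p_c}(M)}=o(\la^{\mu(p_c)})$ directly. But this is precisely the open problem the theorem is designed to circumvent: the subcritical machinery of \cite{BSJ}, \cite{BSAPDE}, \cite{BSK15}, \cite{BSTop} degenerates at $p_c$, and no mechanism is offered for the extension beyond naming it ``the technical heart.'' The paper never attempts a strong $L^{p_c}$ bound on any piece of the operator. Instead it reduces \eqref{2.2} to the \emph{weak-type} operator bound $\|E_\la\|_{L^2\to L^{p_c,\infty}}=o(\la^{1/p_c})$ and upgrades this to the strong bound by interpolating with the Bak--Seeger Lorentz estimate $\|\chi_\la\|_{L^2\to L^{p_c,2}}=O(\la^{1/p_c})$. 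The weak-type bound is then proved by a level-set argument in the style of Bourgain: the hypothesis \eqref{1.9} (via Theorem~\ref{thm1.2} at the subcritical exponent $2n/(n-1)$) disposes of small $\alpha$, the hypothesis \eqref{1.6} (via Theorem~\ref{thm1.1} at $p=\infty$) disposes of large $\alpha$, and the intermediate range is handled by decomposing the level set into $r$-separated pieces $A_j$, running a $TT^*$ argument, bounding the diagonal by the local restriction estimate \eqref{2.24} and the off-diagonal by the kernel bound of Lemma~\ref{lemma2.2}. This Lorentz-space reduction plus level-set decomposition is the structural idea your proposal is missing.

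Second, the long-time step. The dispersive bound $O(\la^{(n-1)/2}(1+\la|t|)^{-(n-1)/2})$ you claim for $U(t)$ with $|t|\ge 1/2$ cannot hold on a compact manifold; there is no decay in $t$ to be had, and the no-caustics hypothesis only delivers the stationary-phase bound $|II|\le C_T\la^{(n-1)/2}$ of \eqref{2.9}, with a constant depending on $T$ and no gain as $t$ grows. More seriously, your concluding interpolation ``via \eqref{1.11'}'' to get $\|L\,e_\la\|_{L^{p_c}}=o(\la^{\mu(p_c)})$ is vacuous: in \eqref{1.11'} the gain factor carries the exponent $\theta(p)=(p-p_c)/p$, which vanishes at $p=p_c$, so an $o(\la^{\mu(\infty)})$ sup-norm improvement gives no improvement at the critical exponent by that route. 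This is exactly why the paper must pass through weak-type estimates and Lorentz interpolation rather than through the real interpolation used to prove Theorem~\ref{thm1.1}.
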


The assumption that the half-wave operators in \eqref{2.1} have no caustics for nonzero times is equivalent to the assumption
that $(M,g)$ has no conjugate points.  This is always the case if $(M,g)$ has nonpositive curvature, and so Theorem~\ref{theorem2.1}
partly generalizes the results from \cite{Scrit} where it was shown that $\|e_\la\|_{L^{p_c}(M)}=O((\log \log \la)^{-\sigma_n})$ for
some $\sigma_n>0$ if $(M,g)$ has nonpositive curvature. 

To prove \eqref{2.2} we shall need to eventually use operators that reproduce eigenfunctions.  To this end fix
\begin{equation}\label{2.3}
\rho \in {\mathcal S}({\mathbb R}) \quad
\text{with } \, \, \rho(0)=1 \quad \text{and } \, \, \Hat \rho(t)=0 \, \, \,
\text{if } \, \, |t|\ge 1.
\end{equation}
Then
\begin{equation}\label{2.4}
\rho(T(\la-P))e_\la = e_\la \quad \text{if } \, \, T\ge1,
\end{equation}
and
\begin{equation}\label{2.5}
\rho(T(\la-P))=\frac1{2\pi T}\int \Hat \rho(t/T) e^{i\la t} e^{-itP} \, dt.
\end{equation}
By the last part of \eqref{2.3} the integrand vanishes when $|t|\ge T$.

We shall require the following pointwise estimates for the kernels of these
operators which make use of our assumption that $e^{-itP}$ has no caustics
when $t\ne 0$.

\begin{lmm}\label{lemma2.2}  Fix $\rho$ as in \eqref{2.3} and assume that
$U(t)$ has no caustics for $t\ne 0$.  Then the kernel of the operator in \eqref{2.5}
satisfies
\begin{equation}\label{2.6}
\bigl|\rho\bigl(T(\la-P)\bigr)(x,y)\bigr| \le C\bigl(\la/d_g(x,y)\bigr)^{\frac{n-1}2}
+C_T\la^{\frac{n-1}2}, \quad \text{if } \, \, T\ge 1,
\end{equation}
where $C$ is a uniform constant depending only on $\rho$ and $(M,g)$, while
$C_T$ also depends on the parameter $T$.
\end{lmm}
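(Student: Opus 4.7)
The plan is to decompose the $t$-integral in \eqref{2.5} according to the size of $|t|$. Fix a cutoff $\beta\in C_0^\infty(\mathbb{R})$ supported in $|t|\le c_0$ and equal to one on $|t|\le c_0/2$, where $c_0$ is chosen smaller than the injectivity radius of $(M,g)$. Writing $\hat\rho(t/T)=\beta(t)\hat\rho(t/T)+(1-\beta(t))\hat\rho(t/T)$ splits the kernel in \eqref{2.5} into a sum $K_{\mathrm{loc}}+K_{\mathrm{far}}$. I expect $K_{\mathrm{loc}}$ to produce the first term on the right of \eqref{2.6} with a $T$-independent constant, and $K_{\mathrm{far}}$ to produce the $C_T\lambda^{(n-1)/2}$ term, using the no-caustic hypothesis crucially.

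For $K_{\mathrm{loc}}$ I would substitute the short-time Hadamard parametrix for $U(t)$ on $|t|\le c_0$: a representation of the form
\[
U(t)(x,y)=(2\pi)^{-n}\int e^{i(\psi(x,y,\xi)-t|\xi|_{g(y)})}a(t,x,y,\xi)\,d\xi
\]
modulo a smoothing remainder, with $\psi$ solving the eikonal equation in $y$ and $a$ a zero-order classical symbol. Exchanging the order of integration and performing the $t$-integral first, repeated integration by parts yields Schwartz-type localization of $|\xi|_{g(y)}$ near $\lambda$ with constants uniform in $T\ge 1$, because the $t$-derivatives of $\hat\rho(t/T)$ are bounded independently of $T$. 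Passing to polar coordinates $\xi=r\omega$ and applying stationary phase on the unit sphere to $\int_{S^{n-1}}e^{ir\langle\exp_y^{-1}(x),\omega\rangle}\,d\omega$ then produces the bound $|K_{\mathrm{loc}}(x,y)|\le C(\lambda/d_g(x,y))^{(n-1)/2}$, as in the classical proofs of Hörmander--Sogge $L^p$ estimates in \cite{Sef,SFIO}; the non-oscillatory region $rd_g(x,y)\le 1$ is handled by the trivial bound and is still dominated by the same expression.

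For $K_{\mathrm{far}}$ the no-caustic assumption enters decisively. Under it, for each $t$ with $c_0/2\le|t|\le T$ the operator $U(t)$ is a Fourier integral operator whose canonical relation is the graph of the geodesic flow, with no singularities of the projection to the base. Covering $[c_0/2,T]$ by finitely many subintervals and using a microlocal partition of unity, one can write $U(t)(x,y)=\int e^{i\varphi(t,x,y,\xi)}b(t,x,y,\xi)\,d\xi$ on each piece, with $\varphi$ a nondegenerate phase function of degree one in $\xi$ and $b$ a zero-order symbol whose $C^k$-seminorms depend on $T$. Rescaling $\xi=\lambda\eta$ turns the total phase of $K_{\mathrm{far}}$ into $\lambda(t+\varphi(t,x,y,\eta))$; nondegeneracy of its $(t,\eta)$-Hessian at the critical set, where $\partial_t\varphi=-|\eta|_g=-1$ and $\partial_\eta\varphi=0$, is precisely the no-caustic condition. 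Stationary phase in $\mathbb{R}^{1+n}$ then contributes $\lambda^{-(n+1)/2}$, which combined with the $\lambda^n$ Jacobian from the rescaling and the uniform boundedness of $b$ on the compact time interval gives $|K_{\mathrm{far}}(x,y)|\le C_T\lambda^{(n-1)/2}$.

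The principal obstacle is constructing the global FIO representation of $U(t)$ on the compact interval $[c_0/2,T]$ under the no-conjugate-point hypothesis and then verifying nondegeneracy of the $(t,\eta)$-stationary phase uniformly on the critical set. If caustics were permitted for some nonzero $t$, Maslov indices and additional frequency variables would enter the oscillatory integral, which would in general degrade the pointwise bound for $K_{\mathrm{far}}$ beyond $\lambda^{(n-1)/2}$ -- and this is precisely what the hypothesis of the lemma is designed to rule out.
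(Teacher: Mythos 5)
Your proposal follows essentially the same route as the paper: the same cutoff decomposition into a short-time piece handled by the Hadamard parametrix (giving the $T$-uniform $(\lambda/d_g(x,y))^{(n-1)/2}$ bound) and a far piece handled by rescaling $\xi=\lambda\eta$ and $(n+1)$-dimensional stationary phase in $(t,\eta)$, with the no-caustic hypothesis supplying the nondegeneracy and the count $\lambda^n\cdot\lambda^{-(n+1)/2}=\lambda^{(n-1)/2}$. The one detail you flag as an obstacle --- verifying the nondegeneracy of the $(t,\eta)$-Hessian --- is exactly what the paper does by combining the rank-$(n-1)$ condition on $\partial^2\varphi/\partial\xi_j\partial\xi_k$ (Proposition 6.1.5 of \cite{SFIO}) with the eikonal equation, which gives $\langle \xi/|\xi|,\nabla_\xi\rangle\varphi'_t\ne 0$ on the critical set.
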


\begin{proof}[Proof of Lemma~\ref{lemma2.2}]
To prove this we may assume for simplicity that the injectivity radius of 
$(M,g)$ is ten or more after possibly rescaling the metric which just has the
effect of changing the eigenvalues of $P$ by a fixed factor.

Fix $\beta\in C^\infty_0(\R)$ satisfying $\beta(t)=1$ if $|t|\le 1$ and
$\text{supp }\beta \subset (-2,2)$.  By \eqref{2.5} we can
then write
\begin{multline}\label{2.7}
\rho\bigl(T(\la-P)\bigr)(x,y)
=\frac1{2\pi T} \int \beta(t)\Hat \rho(t/T) e^{i\la t} \, U(t;x,y)\, dt
\\
+\frac1{2\pi T} \int \bigl(1- \beta(t)\bigr) \Hat \rho(t/T) e^{i\la t} \, U(t;x,y)\, dt
=I + II,
\end{multline}
with $U(t;x,y)$ denoting the kernel of $U(t)=e^{-itP}$.

Since we are assuming that $T\ge1$ it is well known and not difficult to prove
that the first term here satisfies the uniform bounds
\begin{equation}\label{2.8}
|I|\le C\bigl(\la/d_g(x,y)\bigr)^{\frac{n-1}2}.
\end{equation}
To prove this one uses the fact that our assumption about the injectivity radius
means that we can obtain a parametrix for $U(t)$ for $|t|\le 2$ as in 
\cite{Hspec} or in
Theorem 4.1.2
in \cite{SFIO}.  Using this fact it is not difficult to modify the proof of Lemma 5.1.3 in
\cite{SFIO} and use this parametrix along with a simple stationary phase argument
to obtain the uniform bounds \eqref{2.8}.

Due to \eqref{2.8}, the proof of \eqref{2.6} would be complete if we could show that
the last term in \eqref{2.7} satisfies
\begin{equation}\label{2.9}
|II|\le C_T\la^{\frac{n-1}2}.
\end{equation}

To prove this, let us recall some basic facts about the operators $U(t)$.  First they are
Fourier integral operators whose canonical relations are given by
\begin{equation}\label{2.10}
{\mathcal C}=\{(t,x,\tau,\xi, y,\eta): \, \tau=-p(y,\eta), \, \, (x,\xi)=\chi_t(y,\eta)\},
\end{equation}
where $\chi_t: T^*M\backslash 0\to T^*M\backslash 0$ denotes geodesic flow on the cotangent bundle.  If we fix
the time $t$ then the canonical relation of $U(t): \, {\mathcal D}'(M)\to {\mathcal D}'(M)$ is then
\begin{equation}\label{2.11}
{\mathcal C}_t=\{(x,\xi,y,\eta): \, (x,\xi)=\chi_t(y,\eta)\}.
\end{equation}
The assumption that $U(t)$, $t\ne 0$, has no caustics means that the projection from ${\mathcal C}_t$ to $M\times M$
has a differential with rank $2n-1$ everywhere.  The image of this projection then is an immersed hypersurface
of codimension one.

This all means that for $t$ near a given $t_0\ne 0$, modulo smooth errors, we can write the kernel of $U(t)$ as a finite
sum of Fourier integrals which in local coordinates are of the form
\begin{equation}\label{2.12}
\int_{\Rn} e^{i\varphi(x,y,t,\xi)} \, a(t,x,y,\xi) \, d\xi,
\end{equation}
where $a\in S^0$ is a symbol of order zero and $\varphi$ solves the eikonal equation
\begin{equation}\label{2.13}
\varphi'_t =-p(x,\nabla_x\varphi),
\end{equation}
on the support of $a$ with
\begin{equation}\label{2.14}
p(x,\xi)=\sqrt{\sum g^{jk}(x)\xi_j\xi_k}
\end{equation}
being the principal symbol of $P=\sqrt{-\Delta_g}$.  Here $(g^{jk}(x))=(g_{jk}(x))^{-1}$ is the cometric written in our
local coordinate system.  The phase $\varphi$ is real and smooth away from $\xi=0$ and it is homogeneous of degree one
in the $\xi$ variable.  Additionally, on $\text{supp }a$ we have that
$\nabla_x\varphi\ne 0$ if $\nabla_\xi \varphi=0$ and $\xi\ne 0$.  Consequently,
by \eqref{2.13}--\eqref{2.14}, we have on the support of $a$ that
\begin{equation}\label{2.15}
\bigl\langle \tfrac{\xi}{|\xi|},\nabla_\xi\bigr\rangle \varphi'_t\ne 0, \quad \text{if } \, \varphi'_\xi=0.
\end{equation}

To use this we recall that since we are assuming that the projection from ${\mathcal C}_t$ to $M\times M$ has 
a differential of rank $2n-1$ everywhere, we must have that on $\text{supp }a$
\begin{equation}\label{2.16}
\text{Rank }\frac{\partial^2\varphi}{\partial \xi_j\partial \xi_k}\equiv n-1 \quad \text{if } \, \, \nabla_\xi \varphi=0, \, \, \xi\ne 0.
\end{equation}
(See e.g., Proposition 6.1.5 in \cite{SFIO}.)
Since $\varphi$ is homogeneous of degree one in $\xi$ we deduce from \eqref{2.15}--\eqref{2.16} that if we
set 
$$\Phi(t,x,y,\xi)=\varphi(x,y,t,\xi)+t$$
then on $\text{supp }a$ we must have that the mixed Hessian of $\Phi$ with respect to the $n+1$ 
variables $(t,\xi)$ satisfies
$$\det \,  \Bigl(\frac{\partial^2 \Phi}{\partial(t,\xi)\partial(t,\xi)}\Bigr)\ne 0 \quad \text{if } \, \, \nabla_\xi\Phi =0.$$
Consequently, if $a$ is as in \eqref{2.12} and if $b(t) \in C^\infty_0(\R)$ vanishes outside of a small neighborhood of  $t_0$, we conclude from
the method of stationary phase that
\begin{align*}
\iint \bigl(1-\beta(t)\bigr) &\Hat \rho(t/T) e^{it\la}e^{i\varphi(x,y,t,\xi)} b(t)a(t,x,y,\xi) \, d\xi dt
\\
&=\la^n \int_{{\mathbb R}^{n+1}} \bigl(1-\beta(t)\bigr) \Hat \rho(t/T) b(t)a(t,x,y,\la\xi) \, e^{i\la \Phi(x,y,t,\xi)} \, d\xi dt
\\
&=O(\la^n\la^{-\frac{n+1}2})=O(\la^{\frac{n-1}2}).
\end{align*}

Since $2\pi T$ times the term $II$ in \eqref{2.7} can be written as the sum of finitely many terms
of this form (depending on $T$) (and an $O(\la^{-N})$ term coming from the smooth errors in the
parametrix), we deduce that \eqref{2.9} must be valid, which completes the proof of Lemma~\ref{lemma2.2}.
\end{proof}

We can now turn to the proof of Theorem~\ref{theorem2.1}.  We shall adapt an argument from \cite{Scrit}
which uses an idea of Bourgain~\cite{BKak} and Lorentz space estimates of Bak and Seeger~\cite{BakSeeg}
for the operators in \eqref{2.5} corresponding to $T=1$.

\begin{proof}[Proof of Theorem~\ref{theorem2.1}]
Since $\mu(p_c)=1/p_c$, proving \eqref{2.2} is equivalent
to showing that if $E_\la: L^2(M)\to L^2(M)$ is the
projection onto the eigenspace with eigenvalue $\la$ then
\begin{equation}\label{2.2'}
\|E_\la\|_{L^2(M)\to L^{p_c}(M)}=o(\la^{\frac1{p_c}}).
\tag{2.2$'$}
\end{equation}

To do this we shall use an estimate of Bak and
Seeger~\cite{BakSeeg} that will allow us to deduce
\eqref{2.2'} from the easier weak-type estimates
\begin{equation}\label{2.2''}\tag{2.2$''$}
\|E_\la\|_{L^2(M)\to L^{p_c,\infty}(M)}
=o(\la^{\frac1{p_c}}).
\end{equation}
Indeed Bak and Seeger showed that if $\chi_\la$ denote
the standard spectral projection operators
$$\chi_\la f=\sum_{\la_j\in [\la,\la+1)}E_{\la_j}f,$$
then on any $(M,g)$ 
one has the Lorentz space estimates
$$\|\chi_\la\|_{L^2(M)\to L^{p_c,2}(M)}=O(\la^{\frac1{p_c}}).$$
Since $E_\la$ is a projection operator and $E_\la
=\chi_\la\circ E_\la$ this implies that
\begin{equation}\label{2.17}
\|E_\la\|_{L^2(M)\to L^{p_c,2}(M)}=O(\la^{\frac1{p_c}}).
\end{equation}
If one interpolates between \eqref{2.17}
and \eqref{2.2''} one obtains \eqref{2.2'}
(see e.g. Chapter V in Stein and Weiss \cite{SteinWeiss}
or \S 4 in \cite{Scrit}).

Let us rewrite \eqref{2.2''}.  Given $f$ with $L^2(M)$ norm one, we shall let
$$\omega_{E_\la f}=\bigl|\bigr\{x\in M: \, 
|E_\la f(x)|>\alpha \bigr\}\bigr|, \quad \alpha>0,$$
denote the distribution function of $E_\la f$, with
$|U|$ denoting the $dV_g$ measure of $U\subset M$.
Then \eqref{2.2''} is just the statement that for
any fixed $\e>0$ we can find a $\Lambda_\e<\infty$ so that
\begin{equation}\label{2.18}
\omega_{E_\la f}(\alpha)\le \e \la
\alpha^{-\frac{2(n+1)}{n-1}}, \quad
\text{if } \, \, \|f\|_{L^2(M)}=1 \, \, 
\, \text{and } \, \, \la \ge \Lambda_\e.
\end{equation}

To prove this, we first note that because we are
assuming \eqref{1.9}, by Theorem~\ref{thm1.2}
since $2<\tfrac{2n}{n-1}<p_c$ and since
$\mu(\tfrac{2n}{n-1})=\tfrac12 \cdot \tfrac{n-1}{2n}$,
we have
$$\|E_\la \|_{L^2(M)\to L^{\frac{2n}{n-1}}(M)}
=o(\la^{\frac12\cdot \frac{n-1}{2n}}).$$
Therefore, by Chebyshev's inequality, given
$\delta>0$ we can find a $\Lambda_\delta<\infty$ so that
\begin{equation}\label{2.19}
\omega_{E_\la f}(\alpha)\le \delta \la^{\frac12}
\alpha^{-\frac{2n}{n-1}}, \quad
\text{if } \, \, \|f\|_{L^2(M)}=1 \, \, 
\, \text{and } \, \, \la \ge \Lambda_\delta.
\end{equation}

A calculation shows that these bounds yield
\eqref{2.18} if $\alpha$ satisfies
\begin{equation}\label{2.19a}
\alpha \le \la^{\frac{n-1}4}\bigl(\e/\delta\bigr)^{n-1}.
\end{equation}
We shall specify $\delta=\delta(\e)$ in a moment.

We are also assuming that \eqref{1.6} is valid
and hence, by Theorem~\ref{thm1.1},
$$\|E_\la\|_{L^2(M)\to L^\infty(M)}=o(\la^{\frac{n-1}2}).$$
 Thus, given $\delta>0$ as above we have that
there must be a $\Lambda_\delta<\infty$ so that
$$\|E_\la f\|_{L^\infty(M)}\le \delta\la^{\frac{n-1}2},
\quad \text{if } \, \, \|f\|_{L^2(M)}=1
\, \, \, \text{and } \, \, \la \ge \Lambda_\delta.$$
This means that for such $\la$ we have
\begin{equation}\label{2.20}
\omega_{E_\la f}(\alpha)=0 \quad \text{if }
\, \, \alpha \ge \delta \la^{\frac{n-1}2}.
\end{equation}

By \eqref{2.19a} and \eqref{2.20}, we have reduced
matters to showing that for large $\la$ we have
\begin{multline}\label{2.18'}\tag{2.18$'$}
|\{|E_\la f(x)|>\alpha\}|
=\omega_{E_\la f}(\alpha)\le \e \la \alpha^{-\frac{2(n+1)}{n-1}},
\\
 \text{if } \, \, \|f\|_{L^2(M)}=1 \, \, 
\text{and } \, \, 
\alpha \in I_{\e,\delta}=\bigl(
\la^{\frac{n-1}4}(\e/\delta)^{n-1},\delta \la^{\frac{n-1}2}\bigr).
\end{multline}

To prove this, we note that if $\rho$ is as in \eqref{2.3} then for any $T\ge 1$ we have
$$E_\la f=\rho(T(\la-P))E_\la f
\quad \text{and } \, \, \|E_\la f\|_{L^2(M)}\le
\|f\|_{L^2(M)}.
$$
As a result, we would have \eqref{2.18'} if we
could choose $T=T_\e \gg 1$ so that for large 
$\la$ we have
\begin{multline}\label{2.18''}\tag{2.18$''$}
\bigl|\bigl\{x: \, 
|\rho(T_\e(\la-P))h(x)|>\alpha\bigr\}\bigr|
\le \e \la \alpha^{-\frac{2(n+1)}{n-1}},
\\\ \text{if } \, \, \|h\|_{L^2(M)}=1
\, \, \text{and } \, \, \alpha \in I_{\e,\delta}.
\end{multline}

To prove this, as in \cite{Scrit} we shall adapt
an argument of Bourgain~\cite{BKak} which exploits
the bounds in Lemma~\ref{lemma2.2} that are based
on our other assumption that $U(t)$, $t\ne 0$, has
no caustics.  To do so put
\begin{equation}\label{2.21}
r=\la\alpha^{-\frac4{n-1}}T_\e^{\frac{2}{n-1}}.
\end{equation}
Note then that
\begin{equation}\label{2.22}
r\ge \la^{-1} \quad\text{if }\, \,
\delta \le T^{-\frac12}_\e \, \, \text{and }
\, \, \alpha\in I_{\e,\delta},
\end{equation}
due to the fact that $\alpha \le \la^{\frac{n-1}2}\delta$
if $\alpha\in I_{\e,\delta}$.  Since the $\delta$
in \eqref{2.19} and \eqref{2.21} can be made
arbitrarily small, we shall assume that we have this condition after we specify $T_\e$ in a bit.

Let $A=A_\alpha =|\{\, |\rho(T_\e(\la-P))h(x)|>\alpha
\, \}|$ denote the set in \eqref{2.18''}.
Then we are trying to show that $|A|$ satisfies
the bounds there assuming that $\alpha\in I_{\e,\delta}$.
At the expense of replacing $A$ by a set of proportional
measure, we may assume that
\begin{equation}\label{2.23}
A=\bigcup_j A_j \quad
\text{where } \, \, \text{diam }A_j\le r
\, \, \text{and }\, \, d_g(A_j,A_k)\ge C_0 r, \, 
\, j\ne k,
\end{equation}
where $r$ is as in \eqref{2.21}--\eqref{2.22} and
$C_0$ will be specified shortly.  Here $\text{diam }U$
denotes the diameter of $U\subset M$ as measured
by the Riemannian distance function.

In addition to \eqref{2.6} we shall also require
the following simple estimate from \cite{SHR} and
\cite{Scrit}, which says that for $T,\la \ge 1$
we have the uniform bounds
\begin{equation}\label{2.24}
\|\rho(T(\la-P))f\|_{L^2(B_r(x))}\le
Cr^{\frac12}\|f\|_{L^2(M)}
\quad \text{if }\, \, \la^{-1}\le r\le \text{Inj }M.
\end{equation}

If 
$$\psi_\la(x)
=\begin{cases}
\rho(T_\e(\la-P))h(x)/|\rho(T_\e(\la-P))h(x)|, \, \, 
\text{if } \, \, \rho(T_\e(\la-P))h(x)\ne 0
\\ 
1, \, \, \, \text{otherwise }
\end{cases}
$$
denotes the signum function of $\rho(T_\e(\la-P))h$
and if we let $a_j$ be $\psi_\la$ times the indicator
function, $\1_{A_j}$, of the set $A_j$ as above,
then since we are assuming that $\|h\|_2= 1$, by
Chebyshev's inequality and the Cauchy-Schwarz inequality
\begin{align*}
\alpha|A|&\le \Bigl| \, \int \sum_j\rho(T_\e(\la-P))h
\, \overline{\psi_\la \1_{A_j}} \, dV_g \, \Bigr|
\\
&\le \Bigl(\, \int \bigl| \sum_j 
\rho(T_\e(\la-P))a_j\bigr|^2 \, dV_g \, \Bigr)^{1/2}.
\end{align*}
As a result, if 
$$S_\la =\rho(T_\e(\la-P))^*
\circ \rho(T_\e(\la-P))=|\rho|^2(T_\e(\la-P)),$$
then
\begin{align}\label{2.25}
\alpha^2|A|^2 &\le \sum_j \int\bigl|
\rho(T_\e(\la-P))a_j\bigr|^2 \, dV_g
+\sum_{j\ne k} \int\rho(T_\e(\la-P))a_j
\, \overline{\rho(T_\e(\la-P))a_k}\, dV_g
\\
&=\sum_j \int|\rho((T_\e(\la-P))a_j|^2 \, dV_g
+\sum_{j\ne k} \int S_\la a_j \, \overline{a_k}\, dV_g
\notag
\\
&=I+II. \notag
\end{align}

By \eqref{2.21}--\eqref{2.23} and the dual version of
\eqref{2.24}
$$I\le Cr\sum_j \int |a_j|^2 \, dV_g
=Cr|A|=C\la\alpha^{-\frac4{n-1}}T_\e^{-\frac2{n-1}}|A|.
$$
Thus if we let
$$T_\e =\bigl(\e/2C\bigr)^{\frac{n-1}2},
$$
we have 
\begin{equation}\label{2.26}
I\le \frac12 \e \la \alpha^{-\frac4{n-1}}|A|.
\end{equation}

To estimate the other term, $II$, in \eqref{2.25},
we note that since we have finally specified $T_\e$,
by Lemma~\ref{lemma2.2} we have that the kernel
$S_\la(x,y)$ of $S_\la$ satisfies
$$|S_\la(x,y)|\le CT_\e^{-1}\bigl(\la/d_g(x,y)\bigr)^{\frac{n-1}2}+C_\e \la^{\frac{n-1}2},$$
due to the fact that the Schwartz class function
$|\rho|^2$ equals one at the origin and has compactly supported
Fourier transform by \eqref{2.3}.

Therefore, by \eqref{2.21} and \eqref{2.23},
\begin{align}\label{2.27}
II&\le \bigl[ CT^{-1}_\e (\la/C_0 r)^{\frac{n-1}2}
+C_\e \la^{\frac{n-1}2}\bigr] \sum_{j\ne k}\|a_j\|_{L^1}
\|a_k\|_{L^1}
\\
&\le CC_0^{-\frac{n-1}2}\alpha^2|A|^2+C_\e 
\la^{\frac{n-1}2}|A|^2.
\notag
\end{align}
Since we are assuming in \eqref{2.18''} that
$\alpha\in I_{\e,\delta}$ and hence
$\alpha\ge \la^{\frac{n-1}4}(\e/\delta)^{n-1}$, 
we can control the last term as follows
$$C_\e\la^{\frac{n-1}2}|A|^2
\le C_\e (\delta/\e)^{2(n-1)}\alpha^2|A|^2
\le \frac14\alpha^2|A|^2
\quad \text{if } \, \,
C_\e(\delta/\e)^{2(n-1)}\le \frac14.$$
Since the $\delta$ in \eqref{2.19a} and \eqref{2.20}
can be taken to be as small as we like, because
we are assuming \eqref{1.6} and \eqref{1.9},
we can fix such a $\delta$ which also satisfies
the condition in \eqref{2.22} and obtain this
bound for the last term in \eqref{2.27}.
As a result, if we choose the constant $C_0$ in \eqref{2.23} so that $CC_0^{-\frac{n-1}2}=\tfrac14$,
then by \eqref{2.27} we have
$$II\le \frac12 \alpha^2|A|^2.$$

If we combine this with \eqref{2.25} and \eqref{2.26}
we deduce that for large enough $\la$ we have
$$\alpha^2|A|^2\le \e \la \alpha^{-\frac4{n-1}}|A|.$$
Since this is equivalent to the statement that
$$|A|\le \e\la \alpha^{-\frac{2(n+1)}{n-1}},$$
the proof of \eqref{2.18''} and hence that of
Theorem~\ref{theorem2.1} is complete.
\end{proof}

%
%
%
%
%
%
%
%
%
%
%
%
%
%
%
%
%


\bibliography{EF}{}
\bibliographystyle{amsplain}
\end{document}